\documentclass[11pt,reqno]{amsart}

\usepackage[T1]{fontenc}
\usepackage{lmodern}
\usepackage{amsmath,amssymb,amsthm,mathtools}
\usepackage[margin=1.08in]{geometry}
\usepackage{microtype}
\usepackage[colorlinks=true,linkcolor=blue,citecolor=blue,urlcolor=blue]{hyperref}
\hypersetup{
  pdftitle={ Infinite Cascades for the Cubic Nonlinear Half-Wave Equation on the Line},
  pdfauthor={Xi Chen}
}

\numberwithin{equation}{section}
\newtheorem{theorem}{Theorem}[section]
\newtheorem{proposition}[theorem]{Proposition}
\newtheorem{lemma}[theorem]{Lemma}

\theoremstyle{remark}
\newtheorem{remark}[theorem]{Remark}

\newcommand{\R}{\mathbb R}
\newcommand{\N}{\mathbb N}
\newcommand{\X}{\mathcal X}

\newcommand{\eps}{\varepsilon}
\newcommand{\half}{\frac12}

\newcommand{\norm}[1]{\left\lVert #1\right\rVert}
\newcommand{\abs}[1]{\left\lvert #1\right\rvert}
\newcommand{\ip}[2]{\left\langle #1,#2\right\rangle}
\newcommand{\flow}[1]{\Phi^{#1}}
\newcommand{\trans}[1]{\tau_{#1}}

\title[Infinite Cascades for the cubic nonlinear half-wave equation]
{Infinite Cascades for the Cubic Nonlinear Half-Wave Equation on the Line}
\author{Xi Chen}
\address{Department of Mathematics and Computer Science, University of Basel, Spiegelgasse 1, 4051 Basel, Switzerland}
\email{xi01.chen@unibas.ch}
\keywords{cubic nonlinear half-wave equation, energy cascade}

\begin{document}

\begin{abstract}
We construct infinite-time Sobolev cascades for the cubic nonlinear half-wave equation
on the real line.  For $\half<s\leq1$ and
$0\leq\kappa<2s(2s-1)/(4s-1)$, spatial gluing of permanently saturated
two-soliton packets gives a dense set of strictly mass-subthreshold
data for which $\|u(t)\|_{H^s}/(1+t)^\kappa\to\infty$ in the focusing case.  A second construction
replicates finite-burst Szeg\H{o} packets without rescaling.  It yields a dense $G_\delta$ subset of $H^\infty(\R)$ whose solutions perform
logarithmic cascades in every $H^s$ with $s>\half$ in the focusing case and in the defocusing case.
\end{abstract}

\maketitle

\section{Introduction}

\subsection{Background}

Questions about the growth of high Sobolev norms ask whether a solution can
transfer energy to increasingly short scales even when its lower-order norms
remain controlled by conservation laws.  An influential early contribution
was Bourgain's work on long-time Sobolev bounds \cite{BourgainGrowth}.  More
recently, Maspero and Murgante found small-to-large finite-time growth for a
one-dimensional quasilinear NLS with sublinear dispersion
\cite{MasperoMurgante}.

Genuine unbounded trajectories are known for several related models.  Hani
constructed such trajectories for a family of periodic Hamiltonian cubic
equations, including the resonant cubic NLS, rather than for the standard
cubic NLS itself \cite{Hani}.  Hani--Pausader--Tzvetkov--Visciglia proved
modified scattering for small data to a resonant system on
$\mathbb R\times\mathbb T^d$, where
$\mathbb T=\mathbb R/(2\pi\mathbb Z)$, for $1\leq d\leq4$ and, as an application,
obtained focusing and defocusing solutions with unbounded high Sobolev norms
for every $d\geq2$ \cite{HPTV}.
A result more closely related to the half-wave equation considered here is due
to Xu, who studied
\[
 (i\partial_t+\partial_x^2-|D_y|)U=|U|^2U
 \quad\text{on }\mathbb R_x\times\mathbb T_y.
\]
Here $|D_y|$ is the multiplier by the absolute $y$--frequency, and
\[
 \|U\|_{L_x^2H_y^s}
 :=\left(\int_{\mathbb R}\|U(x,\cdot)\|_{H_y^s}^2\,\mathrm dx\right)^{1/2}.
\]
She constructed a class of small solutions for which
$\|U(t)\|_{L_x^2H_y^s}$ is unbounded for every $s>\half$
\cite{XuWaveguide}.

In \cite{ChenPlane}, the author considered the corresponding equation on the plane,
\[
 (i\partial_t+\partial_x^2-|D_y|)U=|U|^2U,
 \qquad (x,y)\in\mathbb R_x\times\mathbb R_y.
\]
He constructed a class of infinite-time blow-up solutions satisfying
\[
 0<\liminf_{t\to\infty}
 \frac{\|U(t)\|_{L_x^2H_y^1}}{1+\log t}
 \leq\limsup_{t\to\infty}
 \frac{\|U(t)\|_{L_x^2H_y^1}}{1+\log t}<\infty.
\]

The cubic Szeg\H{o} equation was first introduced by G\'erard and Grellier, who found
its Lax pair and completely integrable structure
\cite{GerardGrellierSzego}.  Their later nonlinear Fourier transform on the
circle gives a detailed description of the flow.  In particular, they proved
that a dense $G_\delta$ set of smooth data has super-polynomial $H^s$
excursions, for every $s>\half$, along one sequence of times, while the
corresponding solutions return to their initial data along another
\cite{GerardGrellierHankel}. More recently, Gérard and Pushnitski proved the genericity of turbulent solutions of the
cubic Szeg\H o equation on the real line \cite{GerardPushnitskiInverse}.

On the real line, Pocovnicu proved soliton resolution for strongly generic
rational data \cite[Theorem~1.10]{PocovnicuExplicit}.  For nongeneric
rank-two data she obtained only partial soliton resolution and, in particular,
an orbit for which
\[
 \|W(t)\|_{H^s}\geq c_s|t|^{2s-1},
 \qquad |t|\gg1,\quad s>\half.
\]
This construction lies in the exceptional spectral regime where
$W_0=W(0)$ and the square of the Hankel operator
$H_{W_0}h:=\Pi_+(W_0\overline h)$ has a double eigenvalue; here $\Pi_+$ denotes
the projection onto nonnegative Fourier frequencies
\cite[Proposition~1.6, Theorem~1.11, and
Corollary~1.12]{PocovnicuExplicit}.  The cubic Szeg\H{o}
equation also governs the leading resonant dynamics of the cubic nonlinear half-wave
equation.  G\'erard and Grellier established this connection on the circle
\cite{GerardGrellierEffective}.  On the real line, Pocovnicu used a long-time
approximation to construct small solutions of the defocusing cubic nonlinear half-wave
equation whose relative $H^s$ growth at suitable finite times tends to
infinity as the size of the initial datum tends to zero
\cite[Theorem~1.2 and Corollary~1.4]{PocovnicuApprox}.  This approximation is
valid on a long but finite time
interval.  For the focusing equation,
G\'erard--Lenzmann--Pocovnicu--Rapha\"el
found a different mechanism: a two-soliton undergoes a transient turbulent
regime, after which its $H^1$ norm remains permanently large
\cite[Theorem~1.2]{GLPR}.

In this paper, we work with the cubic nonlinear half-wave equation
\begin{equation}\label{eq:hw}
 i\partial_tu-|D|u=\mu|u|^2u,
 \qquad u(0)=u_0,
 \qquad \mu\in\{+1,-1\},
\end{equation}
where
\[
 \widehat f(\xi):=(2\pi)^{-1/2}\int_{\mathbb R}e^{-ix\xi}f(x)\,\mathrm dx,
 \qquad D=-i\partial_x,
 \qquad \widehat{|D|f}(\xi)=|\xi|\widehat f(\xi).
\]
The sign $\mu=+1$ denotes the defocusing case and $\mu=-1$ denotes the focusing case.  We denote the flow by
$\flow\mu_tu_0$ and use the mass-critical symmetry
\begin{equation}\label{eq:scale}
 S_\lambda f(x)=\lambda^{1/2}f(\lambda x),
 \qquad
 \flow\mu_t(S_\lambda f)=S_\lambda\flow\mu_{\lambda t}f.
\end{equation}
The time scaling in \eqref{eq:scale} follows directly from the equation; the
linear dispersion has order one.

We use the notation
\[
 \langle x\rangle=(1+|x|^2)^{1/2},
\]
and, unless variables are displayed explicitly, all function spaces are over
$\mathbb R$.  For $1\leq p\leq\infty$ we abbreviate
\begin{equation}\label{eq:norm-conventions}
 \|f\|_p:=\|f\|_{L^p(\mathbb R)},\qquad
 \|f\|_{H^s}^2:=\int_{\mathbb R}\langle\xi\rangle^{2s}|\widehat f(\xi)|^2
 \,\mathrm d\xi,\qquad
 \|f\|_{\dot H^s}^2:=\int_{\mathbb R}|\xi|^{2s}|\widehat f(\xi)|^2
 \,\mathrm d\xi.
\end{equation}
The notation $\langle f,g\rangle_{H^s}$ refers to the inner product associated
with this $H^s$ norm, $\mathbf 1_E$ is the indicator of $E$, and
$\mathbb N=\{1,2,\ldots\}$.  We write $A\lesssim_\Theta B$ when
$A\leq C_\Theta B$, and $A\asymp_\Theta B$ when both inequalities hold;
$O_\Theta$ and $o_\Theta$ have the analogous meanings.  Subscripts are omitted
when the dependence is harmless.

Let $Q$ be the positive even ground state solving
$|D|Q+Q-Q^3=0$.  For $s>\half$ and $0<q\leq\norm Q_2$, set
\begin{equation}\label{eq:spaces}
 \X^+_s=H^s(\R),
 \qquad
 \X^-_{s,q}=\{f\in H^s(\R):\norm f_2<q\}.
\end{equation}
The Cauchy problem is locally well-posed in $H^s$ for $s\geq\half$, and the
flow is locally Lipschitz on bounded subsets when $s>\half$; we use the
formulation recalled in \cite{ChoffrutPocovnicu}.  The defocusing energy gives
global existence.  In the focusing case, the sharp Gagliardo--Nirenberg
inequality
\[
 \|f\|_4^4\leq \frac{2}{\|Q\|_2^2}
 \|f\|_2^2\,\||D|^{1/2}f\|_2^2
\]
and conservation of mass and energy give global existence whenever
$\|f\|_2<\|Q\|_2$; see \cite{FrankLenzmann,KLR,GLPR}. See also finite-time blow-up results for the focusing cubic half-wave equation in \cite{KLR,CaoSuZhang,KimKwonParkBW,KimKwonParkLogLog,ParkNegativeEnergy}.  Thus every datum in
$\X^-_{s,q}$ is strictly below the threshold, even when
$q=\|Q\|_2$.

\subsection{Results and ideas}

Two features of the known finite-burst constructions must be addressed in
order to obtain a smooth infinite cascade.  A sequence of bursts need not
control the orbit between event times, while the high-frequency scaling that
makes one packet small in $H^s$ makes it large in every stronger Sobolev norm.

For the focusing equation, the construction in \cite{GLPR} provides a packet whose
$H^s$ norm remains large after its activation time, rather than only at one
event time.  Write
\begin{equation}\label{eq:rho-star}
 \rho_s^*:=\frac{2s(2s-1)}{4s-1},
 \qquad \half<s\leq1.
\end{equation}

\begin{theorem}[Dense full-tail focusing cascades]\label{thm:limit}
Let $\half<s\leq1$, $0<q\leq\norm Q_2$, and
$0\leq\kappa<\rho_s^*$.  Then the set
\begin{equation}\label{eq:limit-set}
 \left\{f\in\X^-_{s,q}:
 \lim_{t\to\infty}
 \frac{\norm{\flow-_tf}_{H^s}}{(1+t)^\kappa}=+\infty\right\}
\end{equation}
is dense in $\X^-_{s,q}$.  Equivalently, every nonempty relatively open
subset of $\X^-_{s,q}$ contains such a datum.  In particular, at $\kappa=0$
one has the genuine limit $\norm{\flow-_tf}_{H^s}\to\infty$.
\end{theorem}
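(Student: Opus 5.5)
The approach is to perturb an arbitrary datum by an infinite sum of rescaled, spatially separated GLPR two-soliton packets, with activation times chosen so the sum saturates the rate $(1+t)^\kappa$.

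Fix a nonempty open set $U\subset\X^-_{s,q}$. Pick $g\in U$, which by density we may take Schwartz with $\norm g_2<q$, and a radius $r>0$ such that $B_{H^s}(g,r)\subset U$. We add to $g$ an infinite train of packets
\[
 f=g+\sum_{n\geq1}\trans{x_n}S_{\lambda_n}P_n ,
\]
where $P_n$ is the GLPR two-soliton profile \cite[Theorem~1.2]{GLPR} with small mass $\eta_n$ and some internal parameter. In the focusing case this profile undergoes a transient turbulent regime. After that regime its $\dot H^1$, and by interpolation its $\dot H^s$, norm stays permanently at a large level $\asymp \eta_n^{-\beta}$ for a suitable exponent $\beta$, beginning at an activation time $T_n\asymp\eta_n^{-\gamma}$. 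Under $S_\lambda$ the $\dot H^s$ norm scales like $\lambda^s$ and times scale like $\lambda^{-1}$. Choosing $\lambda_n\ll1$ (low frequency) makes the initial $H^s$ size of each packet as small as desired, comparable to $\eta_n+\lambda_n^{s}\eta_n^{\delta}$. The eventual $H^s$ size is then $\lambda_n^{s}\eta_n^{-\beta}$, attained after the time $\lambda_n^{-1}T_n$. We choose sequences with $\sum_n\norm{S_{\lambda_n}P_n}_{H^s}<r$ and $\sum\eta_n^2$ small, so that $\norm f_2<q$. The balance between the cost of a packet's initial size and the height it eventually reaches, at a given activation time, produces the exponent $\rho_s^*=2s(2s-1)/(4s-1)$. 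Making this optimization precise from the quantitative GLPR bounds is a bookkeeping step. We pick the packets so that packet $n$ is active on $[t_n,\infty)$ and has height at least $n(1+t)^{\kappa}$ on $[t_n,t_{n+1}]$, with $t_n\uparrow\infty$. Since packets stay permanently large, this gives a genuine limit rather than a limsup.

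The separations $x_n$ are chosen inductively and very large. We require two things. First, the half-wave flow propagates at speed at most one, up to the slow $\langle x\rangle^{-2}$ tails of $Q$-type objects, which matters here. Second, the mass-subthreshold global bound $\norm{\flow-_t}_{H^{1/2}}\lesssim1$ must hold. We use a gluing/stability lemma: if $v_1,\dots,v_N$ are global solutions with spatial supports (up to tails) separated by $L$, then the flow of $\sum\trans{x_j}v_j(0)$ stays within $\epsilon(L,T)$ in $H^s$ of $\sum\trans{x_j}v_j(t)$ on $[0,T]$. This follows from local Lipschitz dependence in $H^s$, $s>\half$, and a Gronwall argument, where the cross terms are bounded by tail overlaps. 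Inductively we take $x_{N+1}$ so large that the $N+1$-packet solution agrees with the $N$-packet solution up to error $2^{-N}$ on $[0,t_{N+1}]$, and agrees with the sum of individual flows on $[0,t_{N+2}]$. A diagonal limit, using continuity of the flow at $f$ via mass-subcritical global bounds, then shows that $\flow-_tf$ carries packet $n$ with its full height for all $t\geq t_n$.

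The main obstacle is that the gluing must hold on \emph{infinite} time intervals, since each packet must remain large forever while later packets are added. Finite-time stability with errors growing in $T$ does not immediately suffice. I would handle this by exploiting the staggered structure. On $[t_n,t_{n+1}]$ only finitely many packets matter for the lower bound. A lower bound needs only one packet to be large, and localized $H^s$ norms are almost additive, $\norm{u}_{H^s}^2\geq\norm{\chi_n u}_{H^s}^2-o(1)$. So at each stage we only need finite-time approximation on $[0,t_{n+1}]$, and the infinite-time statement follows by choosing $x_{n+1}$ after $t_{n+1}$ is fixed. A secondary difficulty is the slow algebraic decay of the two-soliton tails; this requires taking the separations $x_n$ superexponentially large compared with the times $t_{n+1}$.
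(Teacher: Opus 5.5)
Your architecture is the paper's. You use rescaled GLPR packets, fix every activation time before any translation, choose each separation once the next activation time is known, and use finite-time decoupling plus translation orthogonality to get, on each window $[t_n,t_{n+1}]$, a lower bound from packet $n$ alone. The genuine gap is the quantitative input you call bookkeeping.

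You claim that the saturated $\dot H^1$ norm yields, ``by interpolation'', a permanent $\dot H^s$ plateau $\asymp\eta^{-\beta}$. But $\norm{u}_{\dot H^s}\leq\norm u_2^{1-s}\norm u_{\dot H^1}^s$ bounds $\dot H^s$ only from above. A huge $\dot H^1$ norm at mass $\eta$ is compatible with a small $\dot H^s$ norm when it is carried by little mass at very high frequency. So for $\half<s<1$ no lower bound follows, and GLPR does not state one. The paper proves it in Lemma~\ref{lem:glpr-hs}:
\begin{itemize}
\item for $t\geq T^-$ the second bubble lives at scale $\kappa_2(t)\asymp_\delta\eta^3$;
\item projecting onto $|\xi|\asymp\kappa_2^{-1}$ gives $\norm{P_{2,t}\mathcal R_2}_{\dot H^s}\gtrsim\kappa_2^{1/2-s}=\eta^{-3a}$, with $a=s-\half$;
\item one must then show that on this annulus nothing cancels this, uniformly in $t\geq T^-$. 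The first bubble (scale $\kappa_1\asymp\eta$) is controlled by Bernstein, the profile corrections by the $R^{-1}$ and $b/(R(1+(1-\beta_1)R))$ bounds, and the exact-solution remainder by GLPR's estimate (5.82).
\end{itemize}
Only at $s=1$ does your argument work as written, since GLPR gives the $H^1$ plateau directly.

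This value $\beta=3a$ is exactly what produces $\rho_s^*$, so the exponent cannot be deferred. Interpolation is legitimate only at the initial time. There it gives $\norm{U_\eta(T_{\rm in})}_{H^s}\lesssim\eta^{-a-2s\delta}$, not $\lambda$ times a positive power of $\eta$. Also, the $L^2$ part is $\eta^{1/2}$, so the mass condition is $\sum_n\eta_n^{1/2}<q-\norm g_2$, not $\sum\eta_n^2$ small. The paper then takes $\lambda^s=c_0d\eta^{a+2s\delta}$. This gives activation time $A\asymp d^{-1/s}\eta^{-(1+a/s+2\delta)}$ and plateau $\gtrsim d\eta^{-(2a-2s\delta)}$. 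Hence the plateau is $\gtrsim d^{1+\rho/s}A^{\rho}$ with $\rho\to2s(2s-1)/(4s-1)$ as $\delta\downarrow0$. Using the interpolation quantity $\eta^{1/2-2s}$ as the plateau would instead give the unjustified larger rate $2s^2/(4s-1)$.

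You also leave out the schedule that lets packet $n$ beat $n(1+t_{n+1})^\kappa$ while costing only $d_n$ initially. The paper uses $B_j=B_0^{\theta^{j-1}}$ with $1<\theta<\rho/\kappa$, so that $B_j^{\rho-\theta\kappa}$ absorbs $d_j^{-1-\rho/s}$, the factor $j$, and the constants.

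Two smaller points. First, your requirement that the $(N+1)$-packet flow be $2^{-N}$-close to the $N$-packet flow on $[0,t_{N+1}]$ cannot hold in $H^s$, because packet $N+1$ is already at its plateau at $t_{N+1}$. What you need, and can get, is closeness to the $N$-packet flow plus the translated flow of packet $N+1$ on $[0,t_{N+2}]$. Combined with orthogonality, this gives the lower bound $\max\{\cdot,\cdot\}-\eps$. Second, your concerns about tails and propagation speed are unnecessary. Finite-time decoupling follows softly from compactness of finite orbit segments and Riemann--Lebesgue.
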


For $s=1$, formula \eqref{eq:rho-star} gives $\rho_1^*=2/3$.  The conclusion is
stronger in time than a limsup statement, although we prove density rather
than residuality.  The proof begins with a prescribed sequence of activation
times and places the corresponding permanent packets progressively farther
apart.  Spatial decoupling is used only on finite intervals, but these
intervals together cover the entire future time axis.  For this reason we use
a direct construction.  The full-tail condition is not naturally expressed
as an open condition, so the Baire-category argument used for the second
theorem is not suitable here.

For smooth data we instead keep Pocovnicu's packets at their original spatial
scale and replicate them finitely many times.  By almost orthogonality, both
the norm of the initial sum and that of its profile at the chosen later time
grow like the square root of the number of copies.  Because the output of each
packet carries an additional logarithmic factor, the number of copies can be
chosen so that the cluster is small in any prescribed Sobolev norm while its
later $H^s$ norm is large.  Put
\begin{equation}\label{eq:hinfty}
 H^\infty(\R):=\bigcap_{m=0}^\infty H^m(\R)
\end{equation}
with its standard Fr\'echet topology, and let
\begin{equation}\label{eq:hinfty-spaces}
 \X^+_\infty=H^\infty(\R),
 \qquad
 \X^-_{\infty,q}=\{f\in H^\infty(\R):\norm f_2<q\}.
\end{equation}
For $s>\half$, define
\begin{equation}\label{eq:alpha}
 \alpha_s:=\frac{2s-1}{4s-1}.
\end{equation}

\begin{theorem}[$H^\infty$ logarithmic cascades]\label{thm:hinfty}
Fix $s>\half$ and let
$h:[0,\infty)\to(0,\infty)$ satisfy
\begin{equation}\label{eq:gauge}
 h(t)=o\bigl((\log(2+t))^{\alpha_s}\bigr).
\end{equation}
For the defocusing sign, the set
\begin{equation}\label{eq:hinfty-fixed}
 \left\{f\in\X^+_\infty:
 \limsup_{t\to\infty}
 \frac{\norm{\flow+_tf}_{H^s}}{h(t)}=+\infty\right\}
\end{equation}
is a dense $G_\delta$ subset of $\X^+_\infty$.  For every
$0<q\leq\norm Q_2$, the analogous focusing set is a dense $G_\delta$ subset
of $\X^-_{\infty,q}$.

Moreover, for either sign there is a dense $G_\delta$ set, relative to the
same phase space, such that every datum $f$ in that set satisfies,
simultaneously for all real $r>\half$ and $0\leq\gamma<\alpha_r$,
\begin{equation}\label{eq:simultaneous}
 \limsup_{t\to\infty}
 \frac{\norm{\flow\mu_tf}_{H^r}}
 { (\log(2+t))^\gamma}=+\infty.
\end{equation}
The data may alternatively be obtained by a direct diagonal gliding-hump
construction in every nonempty Fr\'echet-open set.
\end{theorem}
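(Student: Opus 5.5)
The plan is to use a Baire-category argument in the Fréchet space $H^\infty$, or in the open subset $\X^-_{\infty,q}$, which is itself a Baire space. For the defocusing case and a gauge $h$ satisfying \eqref{eq:gauge}, the target set \eqref{eq:hinfty-fixed} can be written as
\[
\bigcap_{N\in\N}\bigcap_{T\in\N}\ \bigcup_{t>T}\Bigl\{f:\ \norm{\flow+_tf}_{H^s}>N h(t)\Bigr\}.
\]
Each inner set is open in $H^\infty$. This holds because $\flow+_t$ is continuous on $H^s$ for $s>\half$, since the flow is locally Lipschitz on bounded sets, and because the $H^\infty$ topology is finer than the $H^s$ topology. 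The set is therefore a $G_\delta$, and the whole proof reduces to density of each open set
\[
U_{N,T}=\bigcup_{t>T}\{\norm{\flow+_tf}_{H^s}>Nh(t)\}.
\]
For the simultaneous statement \eqref{eq:simultaneous}, I would intersect over countably many pairs: rationals $r>\half$ and rationals $\gamma<\alpha_r$. Since $\alpha_r$ is continuous and increasing and $\norm{\cdot}_{H^r}$ is monotone in $r$, this countable family gives the conclusion for all real $r$ and all $\gamma<\alpha_r$.

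To prove density, fix $g\in H^\infty$ (with $\norm g_2<q$ in the focusing case), a Sobolev index $m$, and $\eps>0$. I first truncate $g$ to a Schwartz, compactly Fourier-supported approximation $g_0$. I then set $f=g_0+\sum_{j=1}^{M}\trans{x_j}P$. Here $P$ is a fixed-scale Pocovnicu packet: a small datum for which the long-time Szegő approximation \cite{PocovnicuApprox} yields, at some time $t_P$, a relative growth
\[
\norm{\flow\mu_{t_P}P}_{H^s}\gtrsim \norm P_{H^s}\,(\log t_P)^{\alpha_s}\cdot(\text{large}).
\]
More precisely, the small-amplitude, long-time relation gives output amplitude about $\delta$ times a logarithmic factor, at a time $t\sim\delta^{-2}$ or exponentially longer. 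The translations $x_j$ are spread very far apart. By almost orthogonality,
\[
\norm{\sum_j\trans{x_j}P}_{H^m}\approx\sqrt M\norm P_{H^m}.
\]
I choose $\delta$ and $M$ so that $\sqrt M\norm P_{H^m}<\eps$, which also keeps the mass below $q$, while
\[
\sqrt M\,\norm{\flow\mu_{t_P}P}_{H^s}>2N h(t_P)+\norm{\flow\mu_{t_P}g_0}_{H^s}+1.
\]
This is possible exactly because the packet's log gain beats $h(t_P)=o((\log t_P)^{\alpha_s})$. The exponent $\alpha_s=(2s-1)/(4s-1)$ should come from optimizing the packet amplitude against the time at which the Szegő growth $t^{2s-1}$ saturates. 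I would first verify this bookkeeping against the stated exponent.

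The next step is decoupling. On the finite interval $[0,t_P]$, finite speed of propagation fails for $|D|$, but there is still weak decay of interactions: the kernel of $e^{-it|D|}$ spreads at speed $1$ with polynomial tails. A stability or perturbation argument in $H^s$ on $[0,t_P]$ then shows
\[
\flow\mu_tf=\flow\mu_tg_0+\sum_j\trans{x_j}\flow\mu_tP+o(1)
\]
as the separations tend to infinity, with $M$ and $t_P$ fixed first. Evaluating at $t=t_P>T$ places $f$ in $U_{N,T}$. The focusing case is identical, since all data remain strictly subthreshold and global, and the same packet works because Pocovnicu's approximation is sign-insensitive at small amplitude.

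I expect the main obstacle to be the quantitative packet input. Specifically, one needs a single fixed-scale packet whose $H^s$ output at time $t_P$ exceeds its input by a factor of order $(\log t_P)^{\alpha_s}$, while its higher $H^m$ norms stay comparable to its $H^s$ norm, so that no rescaling is needed. Extracting this from Pocovnicu's finite-time approximation on a time scale that is long but finite, and tracking the constants, is the delicate step. The gluing step is comparatively routine: the time is fixed, so decoupling only requires the separations to tend to infinity. The direct gliding-hump alternative follows by running the same insertion with successively chosen $(N_k,T_k,\eps_k)$ and summable $H^m$ perturbations, diagonalizing over $m$.
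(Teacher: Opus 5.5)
Your route is the paper's. You use the same open sets $\bigcup_{t>T}\{\norm{\flow\mu_tf}_{H^s}>Nh(t)\}$. Density comes from adding many far-separated copies of an unscaled Pocovnicu packet $\delta W_0^\mu$: all its $H^m$ norms are $O(\delta)$, so the $\sqrt M$ almost-orthogonality factor is paid equally at input and output, while the output carries the extra factor $(\log(1/\delta))^{\alpha_s}$. Decoupling happens at one fixed event time, and \eqref{eq:simultaneous} follows by intersecting over rational pairs.

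One step, however, can fail as written. You require
\[
 \sqrt M\,\norm{\flow\mu_{t_P}P}_{H^s}>2Nh(t_P)+\norm{\flow\mu_{t_P}g_0}_{H^s}+1 .
\]
The budget $\sqrt M\norm P_{H^m}<\eps$ caps the left side at order $\eps(\log t_P)^{\alpha_s}$, since the packet gain is only logarithmic. Nothing bounds the background orbit a priori. If $\norm{\flow\mu_tg_0}_{H^s}\gtrsim(\log t)^{\alpha_s}$ for large $t$, the inequality cannot be met.

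You do not need to beat the background. After decoupling, the cluster output at the fixed time $t_P$ sits at translations far from $\flow\mu_{t_P}g_0$. Lemma~\ref{lem:orthog} then kills the cross term, giving $\norm{\flow\mu_{t_P}f}_{H^s}\geq\norm{\sum_j\trans{x_j}\flow\mu_{t_P}P}_{H^s}-o(1)$; this is the paper's \eqref{eq:max-lower}. Alternatively, note that in the bad case $g_0$ already lies in $U_{N,T}$.

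Three further points should be settled rather than hedged.

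\begin{itemize}
\item \emph{Event time.} The comparison with $h$ works only because the event time is polynomial in the amplitude. From $t_P\asymp\delta^{-2}(\log(1/\delta))^{1/(4s-1)}$ one gets $\log t_P=2\log(1/\delta)+O(\log\log(1/\delta))$, hence $h(t_P)=o((\log(1/\delta))^{\alpha_s})$. Your allowance for an \emph{exponentially longer} event time would make $(\log t_P)^{\alpha_s}$ far larger than the gain, and the argument would collapse.
\item \emph{Focusing sign.} The cubic sign enters the resonant Szeg\H{o} dynamics, not only the error terms, so ``sign-insensitive'' is not a justification for reusing the same packet. The paper takes $W_0^-=JW_0^+$ with $Jf(x)=\overline{f(-x)}$. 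This map preserves positive Fourier support and all Sobolev norms, and it intertwines the two Szeg\H{o} signs.
\item \emph{Gliding hump.} Summability of the perturbations is not enough. Each new cluster must also be smaller, in the relevant $H^{\sigma_i}$ norms, than common continuity radii at all earlier event times $T_1,\dots,T_{j-1}$. Otherwise the earlier events need not survive in the limit.
\end{itemize}
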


The two conclusions have different strengths.  Theorem~\ref{thm:limit} gives a
genuine limit but only for focusing data in a fixed $H^s$, with
$\half<s\leq1$.  Theorem~\ref{thm:hinfty} gives one datum which cascades in all
$H^s$, $s>\half$, but only along a sequence of times.  Our argument does not
produce a Schwartz datum: the translations needed for decoupling create large
spatial moments.  This is a limitation of the construction, not a
nonexistence theorem.

Hani's work \cite{Hani} provides a useful methodological point of comparison. He pastes a turbulent solution, supported on a carefully chosen Fourier set, to a background solution with disjoint Fourier support; a combinatorial noninteraction condition makes this pasting exact. Long-time strong instability is then converted by a Baire-category argument into a residual family of unbounded Sobolev orbits \cite{Hani}. Our smooth-data construction follows the same broad instability-to-genericity principle, but the decoupling mechanism is different. For the cubic nonlinear half-wave equation on \(\mathbb R\), packets are separated by large translations in physical space and decouple only asymptotically on prescribed finite time intervals. In particular, we do not use an exact invariant Fourier-support decomposition of the type available in Hani's periodic models.

The Baire-category argument itself is therefore not the main novelty of this paper. What is new is the construction of packets for the full cubic nonlinear half-wave flow that can be added to arbitrary backgrounds without losing the required long-time information. In the focusing argument, the permanently saturated two-soliton packets of G\'erard--Lenzmann--Pocovnicu--Rapha\"el \cite{GLPR} are scheduled so that successive finite decoupling windows cover the entire future time axis. This gives a dense full-tail result with a quantified polynomial rate, rather than only unbounded growth along a sequence of times. For smooth data, we first replicate finitely many unscaled Pocovnicu packets. This avoids the loss in stronger Sobolev norms caused by high-frequency rescaling and makes the Baire argument work in the Fr\'echet space \(H^\infty(\mathbb R)\), producing, for either sign, simultaneous logarithmic cascades in every \(H^s\), \(s>\frac12\), while keeping the focusing data strictly below the mass threshold. Thus the connection with \cite{Hani} lies in the pasting/Baire framework; the packet mechanisms, the decoupling argument, and the full-tail and Fr\'echet-space conclusions are specific to the present work.
\subsection{Organization of the paper}
Section~\ref{sec:decoupling} establishes the finite-time decoupling tools used
in both constructions.  Section~\ref{sec:persistent-focusing} develops the
persistent focusing packets and proves Theorem~\ref{thm:limit}.
Section~\ref{sec:smooth-data} constructs the replicated smooth packets and
carries out the Fr\'echet-space argument for Theorem~\ref{thm:hinfty}.

\section{Finite-time decoupling tools}\label{sec:decoupling}

For $y\in\R$, let $(\trans yf)(x)=f(x-y)$.  We use the following standard
consequences of the algebra property of $H^s(\R)$ for $s>\half$.

\begin{lemma}[Separated flows]\label{lem:decouple}
Let $s>\half$, $f,g\in H^s(\R)$, and $T<\infty$.  Assume that the solutions
from $f$, $g$, and $f+\trans yg$ exist on $[0,T]$ for all sufficiently large
$|y|$.  Then
\begin{equation}\label{eq:decouple}
 \sup_{0\leq t\leq T}
 \norm{\flow\mu_t(f+\trans yg)-\flow\mu_tf-
 \trans y(\flow\mu_tg)}_{H^s}\longrightarrow0
\end{equation}
as $|y|\to\infty$.
\end{lemma}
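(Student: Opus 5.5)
Set $U(t)=\flow\mu_tf$, $V(t)=\flow\mu_tg$, $W_y(t)=\flow\mu_t(f+\trans yg)$, and $R_y=W_y-U-\trans yV$. By compactness of $[0,T]$ and continuity of the flows, $U$ and $V$ lie in $C([0,T];H^s)$. Hence $M:=\sup_{t\le T}(\norm{U(t)}_{H^s}+\norm{V(t)}_{H^s})<\infty$. The plan is a Duhamel/Gronwall argument in $H^s$ that uses the algebra property. We write $e^{-it|D|}$ for the free propagator, which is unitary on $H^s$ and commutes with $\trans y$. Since $\trans yV$ solves \eqref{eq:hw} (the equation is translation invariant) and $R_y(0)=0$, we get
\[
 R_y(t)=-i\mu\int_0^te^{-i(t-t')|D|}\Bigl(|W_y|^2W_y-|U|^2U-\trans y(|V|^2V)\Bigr)(t')\,\mathrm dt'.
\]

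The first step is to split the nonlinearity. With $W_y=A+R_y$ and $A=U+\trans yV$, we write
\[
|W_y|^2W_y-|U|^2U-|\trans yV|^2\trans yV=N(A+R_y)-N(A)+\bigl(N(A)-N(U)-N(\trans yV)\bigr),
\]
where $N(z)=|z|^2z$. The second bracket is a finite sum of trilinear terms that each contain at least one factor from $\{U,\bar U\}$ and at least one from $\{\trans yV,\overline{\trans yV}\}$. By the algebra property, $\norm{N(A+R)-N(A)}_{H^s}\lesssim(\norm A_{H^s}+\norm R_{H^s})^2\norm R_{H^s}$. A bootstrap then closes. As long as $\norm{R_y}_{H^s}\le1$ on $[0,t]$, Gronwall gives
\[
\sup_{[0,T]}\norm{R_y}_{H^s}\le C(M,T)\,\eps(y),\qquad \eps(y):=\sup_{t\le T}\norm{N(A)-N(U)-N(\trans yV)}_{H^s}.
\]
Continuity in $t$ and smallness of $\eps(y)$ then close the bootstrap. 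The assumed existence of $W_y$ on $[0,T]$ is consistent with this, though it is not even needed. So everything reduces to showing $\eps(y)\to0$.

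The main obstacle is this step: the cross terms must be small in $H^s$ uniformly in $t\in[0,T]$. For a single fixed $t$ we would approximate $U(t)$ and $V(t)$ in $H^s$ by $C_c^\infty$ functions $\phi,\psi$. Once $|y|$ exceeds the sum of their support radii, every product containing both $\phi$ and $\trans y\psi$ vanishes identically. The error is controlled by the algebra property, namely $\lesssim M^2(\norm{U-\phi}_{H^s}+\norm{V-\psi}_{H^s})$. For uniformity in $t$ we would use that $\{U(t)\}_{t\le T}$ and $\{V(t)\}_{t\le T}$ are compact subsets of $H^s$. Given $\delta>0$, we cover them by finitely many $H^s$-balls of radius $\delta$ centered at compactly supported smooth functions. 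Choosing $|y|$ larger than all the relevant support radii then makes $\eps(y)\lesssim M^2\delta$ for all $t$. Since $\delta$ is arbitrary, $\eps(y)\to0$, which proves \eqref{eq:decouple}.
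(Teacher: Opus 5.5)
Your proposal is correct and follows essentially the same route as the paper. The paper also combines Duhamel's formula for the remainder and the algebra-property Lipschitz bound with Gronwall. It likewise gets smallness of the cubic cross terms uniformly on $[0,T]$ by $C_c^\infty$ approximation and finite nets over the compact orbit segments. Your explicit bootstrap, keeping $\norm{R_y}_{H^s}\le 1$, usefully supplies the uniform-in-$y$ bound on $\flow\mu_t(f+\trans yg)$ that the paper's brief appeal to the local Lipschitz estimate leaves implicit.
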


\begin{proof}
The orbit segments of $u(t)=\flow\mu_tf$ and $v(t)=\flow\mu_tg$ are compact
in $H^s$.  Uniformly on $[0,T]$, products containing one untranslated factor
from the orbit of $u$ and one translated factor from the orbit of $v$ tend to
zero in $H^s$.  This follows first for fixed factors by approximation with
compactly supported smooth functions, and then uniformly for the compact
orbit segments by finite nets.  Consequently,
\[
 \sup_{0\leq t\leq T}
 \norm{|u+\trans yv|^2(u+\trans yv)-|u|^2u-
 \trans y(|v|^2v)}_{H^s}\longrightarrow0.
\]
Duhamel's formula, the local Lipschitz estimate for the cubic map on $H^s$,
and Gronwall's inequality prove \eqref{eq:decouple}.  The sign does not enter
the estimate.  In the focusing case the subthreshold assumption guarantees
the required existence.
\end{proof}

\begin{lemma}[Compact-uniform translation orthogonality]\label{lem:orthog}
If $K,L\subset H^s(\R)$ are compact, then
\begin{equation}\label{eq:orthog}
 \sup_{a\in K,\,b\in L}
 \abs{\ip a{\trans yb}_{H^s}}\longrightarrow0
 \qquad (|y|\to\infty).
\end{equation}
In particular, for fixed $a,b\in H^s$,
$\norm{a+\trans yb}_{H^s}^2\to\norm a_{H^s}^2+\norm b_{H^s}^2$.
\end{lemma}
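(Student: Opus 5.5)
The plan is to prove \eqref{eq:orthog} first for fixed $a,b$ and then upgrade to uniformity over compact sets by a finite-net argument, as in the proof of Lemma~\ref{lem:decouple}.

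\emph{Fixed pair.} By Plancherel,
\[
 \ip a{\trans yb}_{H^s}=\int_{\R}\langle\xi\rangle^{2s}\widehat a(\xi)\,\overline{e^{-iy\xi}\widehat b(\xi)}\,\mathrm d\xi
 =\int_{\R}e^{iy\xi}\,\langle\xi\rangle^{2s}\widehat a(\xi)\overline{\widehat b(\xi)}\,\mathrm d\xi .
\]
The function $\langle\xi\rangle^{2s}\widehat a\,\overline{\widehat b}$ lies in $L^1(\R)$ by Cauchy--Schwarz, with $L^1$ norm at most $\norm a_{H^s}\norm b_{H^s}$. The Riemann--Lebesgue lemma therefore gives $\ip a{\trans yb}_{H^s}\to0$ as $|y|\to\infty$. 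An equally good alternative is to approximate $a,b$ by $C_c^\infty$ functions: the inner product vanishes once $|y|$ exceeds the sum of the support radii.

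\emph{Uniformity.} Fix $\eps>0$. Choose finite $\eps$-nets $\{a_j\}\subset K$ and $\{b_k\}\subset L$ in $H^s$, and let $M=\sup_K\norm{\cdot}_{H^s}+\sup_L\norm{\cdot}_{H^s}$, which is finite by compactness. For $a\in K$ and $b\in L$, pick $a_j,b_k$ within $\eps$. Since translation is an isometry on $H^s$,
\[
 \abs{\ip a{\trans yb}_{H^s}-\ip{a_j}{\trans yb_k}_{H^s}}\leq \eps\norm b_{H^s}+\norm{a_j}_{H^s}\eps\leq 2M\eps .
\]
There are only finitely many pairs $(j,k)$, so the fixed-pair step gives $\max_{j,k}|\ip{a_j}{\trans yb_k}_{H^s}|<\eps$ for all large $|y|$. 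Hence the supremum in \eqref{eq:orthog} is at most $(2M+1)\eps$ for large $|y|$, which proves \eqref{eq:orthog}.

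\emph{The consequence.} Expand
\[
 \norm{a+\trans yb}_{H^s}^2=\norm a_{H^s}^2+\norm{\trans yb}_{H^s}^2+2\,\mathrm{Re}\,\ip a{\trans yb}_{H^s}.
\]
Translation invariance of the $H^s$ norm gives $\norm{\trans yb}_{H^s}=\norm b_{H^s}$, and the cross term tends to zero by the fixed-pair case. There is no real obstacle. The only point requiring care is the uniformity step, which relies on translation being an isometry so that the net error is independent of $y$.
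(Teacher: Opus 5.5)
Your argument is correct and is the paper's proof: Riemann--Lebesgue applied to the $L^1$ function $\langle\xi\rangle^{2s}\widehat a\,\overline{\widehat b}$ for a fixed pair, then finite nets, using that $\trans y$ is an $H^s$ isometry. One correction to your aside: for non-integer $s$ the $H^s$ inner product is nonlocal, so disjointly supported $C_c^\infty$ functions need not be $H^s$-orthogonal. That alternative route works only if you approximate $\langle D\rangle^s a$ and $\langle D\rangle^s b$ by $C_c^\infty$ functions in $L^2$, rather than $a$ and $b$ themselves.
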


\begin{proof}
For fixed $a,b$, the inner product is the Fourier transform at $y$ of
an $L^1$ function, so the Riemann--Lebesgue lemma applies.  Finite nets for
$K$ and $L$ give the uniform statement.
\end{proof}

For every $\varepsilon>0$, the two lemmas give the lower bound
\begin{equation}\label{eq:max-lower}
 \norm{\flow\mu_t(f+\trans yg)}_{H^s}
 \geq\max\{\norm{\flow\mu_tf}_{H^s},
 \norm{\flow\mu_tg}_{H^s}\}-\varepsilon.
\end{equation}
It holds uniformly on any prescribed finite interval once $|y|$ is large
enough (depending on $\varepsilon$).

\section{The persistent focusing construction}
\label{sec:persistent-focusing}

\subsection{Persistent focusing packets}

The construction begins with the solution supplied by
\cite[Theorem~1.2]{GLPR}, whose $H^1$ norm becomes large and remains so.  That
paper uses the convention
$iU_t+|D|U=|U|^2U$; complex conjugation converts it to the focusing sign in
\eqref{eq:hw}, without reversing time.

Fix $0<\delta\ll1$.  For all sufficiently small $\eta>0$, the solution
$U_\eta$ is defined on $[T_{\rm in},\infty)$, where
\begin{equation}\label{eq:times}
 T_{\rm in}=\eta^{-2\delta},
 \qquad T^-=\delta\eta^{-1}.
\end{equation}
Its conserved mass and $L^2$ norm satisfy
\begin{equation}\label{eq:glpr-mass}
 \norm{U_\eta(t)}_2^2\asymp\eta,
 \qquad \norm{U_\eta(t)}_2\asymp\eta^{1/2}.
\end{equation}
Its growth law on
$[T_{\rm in},T^-]$ is
\begin{equation}\label{eq:glpr-growth}
 \norm{U_\eta(t)}_{H^1}^2
 =\frac{t^2}{\eta}(1+O(\sqrt\delta)),
\end{equation}
and its saturated regime satisfies
\begin{equation}\label{eq:glpr-saturation}
 \norm{U_\eta(t)}_{H^1}^2
 =\eta^{-3}e^{O(1/\delta)},
 \qquad t\geq T^-.
\end{equation}
In particular, evaluating \eqref{eq:glpr-growth} at $T_{\rm in}$ gives
\begin{equation}\label{eq:glpr-initial-h1}
 \norm{U_\eta(T_{\rm in})}_{H^1}
 \lesssim_\delta\eta^{-1/2-2\delta}.
\end{equation}

\begin{remark}\label{rem:glpr-normalization}
Evaluating the growth law \cite[(1.10)]{GLPR} at
$T_{\rm in}=\eta^{-2\delta}$ gives the exponent in
\eqref{eq:glpr-initial-h1}.  The same exponent follows from the profile:
$1-\beta_2(T_{\rm in})\asymp\eta/T_{\rm in}^2
=\eta^{1+4\delta}$, the rescaled $\dot H^1$ norm of $Q_{\beta_2}$ is
uniformly nonzero, and the exact-solution remainder is negligible.  The
separate initial-derivative line in the statement of \cite[Theorem~1.2]{GLPR}
prints $\eta^{-1-2\delta}$ for the squared norm, whereas the displayed growth
law and the profile both give $\eta^{-1-4\delta}$.  We use the latter,
internally consistent value.
\end{remark}

We first derive from the two-soliton decomposition a lower bound at
intermediate Sobolev regularity.  This form of the estimate is not stated
explicitly in \cite{GLPR}.

We recall the notation in \cite{GLPR} used below.  The solitary-wave profile $Q_\beta$
is normalized by
\[
 \frac{|D|-\beta D}{1-\beta}Q_\beta+Q_\beta=|Q_\beta|^2Q_\beta.
\]
For the two bubbles, $\lambda_j(t)$, $x_j(t)$, $\beta_j(t)$, and
$\gamma_j(t)$ denote respectively the scale, center, velocity, and phase
parameters.  For a fixed large approximation order $N_0$, set
\[
 \kappa_j=\lambda_j(1-\beta_j),\qquad
 y_j=\frac{x-x_j}{\kappa_j},\qquad
 R=\frac{x_2-x_1}{\kappa_1},\qquad
 b=\frac{1-\beta_2}{1-\beta_1},
\]
and let $V_j^{(N_0)}$ be the order-$N_0$ corrected profile in the variable
$y_j$.  Its physical rescaling is
\[
 \mathcal R_j(t,x)=\lambda_j(t)^{-1/2}
 V_j^{(N_0)}(t,y_j)e^{i\gamma_j(t)}.
\]
Time arguments will be suppressed when no confusion is possible.

\begin{lemma}[Sobolev size of the saturated bubble]\label{lem:glpr-hs}
Let $\half<s\leq1$ and put $a=s-\half$.  After fixing $\delta$ and taking
$\eta$ sufficiently small,
\begin{align}
 \norm{U_\eta(T_{\rm in})}_{H^s}
 &\lesssim_{s,\delta}
 \eta^{-a-2s\delta},\label{eq:initial-hs}\\
 \inf_{t\geq T^-}\norm{U_\eta(t)}_{\dot H^s}
 &\gtrsim_{s,\delta}\eta^{-3a}.
 \label{eq:saturated-hs}
\end{align}
\end{lemma}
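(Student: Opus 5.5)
The two bounds have different natures. I would prove \eqref{eq:initial-hs} by interpolation alone, and \eqref{eq:saturated-hs} from the two-soliton decomposition of \cite{GLPR}.

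\emph{Initial bound.} For $0\le s\le 1$, Hölder's inequality on the Fourier side gives
\[
 \|f\|_{H^s}\le \|f\|_2^{1-s}\,\|f\|_{H^1}^{s}.
\]
I apply this at $t=T_{\rm in}$ with two inputs: the mass law \eqref{eq:glpr-mass}, $\|U_\eta\|_2\asymp\eta^{1/2}$, and the $H^1$ bound \eqref{eq:glpr-initial-h1}, taken in the normalization fixed in Remark~\ref{rem:glpr-normalization}. The result is
\[
 \|U_\eta(T_{\rm in})\|_{H^s}\lesssim_\delta \eta^{\frac{1-s}{2}}\,\eta^{-s(\frac12+2\delta)}=\eta^{\frac12-s-2s\delta}=\eta^{-a-2s\delta},
\]
which is \eqref{eq:initial-hs}. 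Nothing here is specific to the bubble structure.

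\emph{Saturated lower bound.} Interpolation between $L^2$ and $H^1$ only gives upper bounds on the $\dot H^s$ norm, so for \eqref{eq:saturated-hs} I use the decomposition
\[
 U_\eta=\mathcal R_1+\mathcal R_2+\varepsilon,\qquad t\ge T^-,
\]
with the modulation bounds of \cite{GLPR} in the saturated regime. By scaling,
\[
 \|\mathcal R_j\|_{\dot H^s}=\lambda_j^{-1/2}\kappa_j^{\frac12-s}\,\|V_j^{(N_0)}\|_{\dot H^s}.
\]
The plan has four steps.

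\begin{enumerate}
\item From \cite{GLPR}, for $t\ge T^-$ the second bubble has $\lambda_2\asymp_\delta 1$ and $1-\beta_2\asymp_\delta\eta^{3}$ up to factors $e^{O(1/\delta)}$, hence $\kappa_2\asymp_\delta\eta^3$. This is the content behind \eqref{eq:glpr-saturation}: indeed $\lambda_2^{-1}\kappa_2^{-1}\|V_2\|_{\dot H^1}^2\asymp\eta^{-3}$.
\item The normalized profiles $Q_\beta$ have $\dot H^s$ norm bounded below uniformly as $\beta\to1$. They converge, for instance in $H^{1}$, to a nontrivial limiting profile, and the corrections in $V_2^{(N_0)}$ are $O(\eta)$-small. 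Hence $\|\mathcal R_2\|_{\dot H^s}\gtrsim_\delta \kappa_2^{-a}\asymp\eta^{-3a}$.
\item The first bubble and the remainder must not cancel this. $\mathcal R_1$ has $\kappa_1$ of unit size relative to $\kappa_2$, so its $\dot H^s$ norm is $O(1)$. Moreover, $\mathcal R_1$ and $\mathcal R_2$ are separated, by $R\to\infty$ or at least by a large distance in units of $\kappa_2$, so their cross term is negligible by the same Riemann--Lebesgue mechanism as Lemma~\ref{lem:orthog}. For $\varepsilon$, I would interpolate its smallness in $L^2$ against its $H^1$ bound from \cite{GLPR}, getting $\|\varepsilon\|_{\dot H^s}=o(\eta^{-3a})$.
\item Since every bound above is uniform in $t\ge T^-$, taking the infimum gives \eqref{eq:saturated-hs}.
\end{enumerate}

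\emph{Main obstacle.} The delicate point is step~1. One must extract from \cite{GLPR} parameter asymptotics that hold uniformly for all $t\ge T^-$, not just at $T^-$, and check that the $e^{O(1/\delta)}$ losses appear only as multiplicative constants. If the parameter laws are not available in that form, I would fall back on a parameter-free argument. It uses $\|U\|_{\dot H^1}\le\|U\|_{\dot H^s}^{\theta}\|U\|_{\dot H^2}^{1-\theta}$ with $\theta=\frac{1}{2-s}$, where the upper bound on $\|U\|_{\dot H^2}$ comes from the same profile scaling. Combined with \eqref{eq:glpr-saturation}, this yields the $\eta^{-3a}$ lower bound, with constants depending only on $s$ and $\delta$.
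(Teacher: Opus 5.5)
Your interpolation proof of \eqref{eq:initial-hs} is the paper's argument. For \eqref{eq:saturated-hs} you take a different route. You bound the full $\dot H^s$ norms of $\mathcal R_1$, $\mathcal R_2$, $\varepsilon$ and use the triangle inequality. The paper instead projects onto the frequency annulus $|\xi|\asymp\kappa_2(t)^{-1}$ of the second bubble and uses $\norm{U_\eta}_{\dot H^s}\geq\norm{P_{2,t}U_\eta}_{\dot H^s}$. On that annulus $\mathcal R_2$ has size $\gtrsim\eta^{-3a}$. The bare first bubble, its correction and the remainder contribute only $O(\eta)\eta^{-3a}$, $O(\eta^2)\eta^{-3a}$ and $o(\eta^{-3a})$.

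What the annulus buys is economy of input. Via Bernstein, the only recorded information suffices:
\begin{itemize}
\item the $L^2_y$ bounds $\norm{V_j^{(N_0)}-Q_{\beta_j}}_{L^2_y}\lesssim R^{-1}$, improved to $\eta^3$ for $j=1$;
\item $\dot H^1$ bounds on $\mathcal R_1^Q$ and on the remainder.
\end{itemize}
So no $\dot H^s$ control of the correction profiles is needed, and no cross-term analysis. Your route is simpler in concept, but it needs uniform $\dot H^s$ (say $\dot H^1$) bounds on the rescaled corrections $V_j^{(N_0)}-Q_{\beta_j}$. You assert these ("$O(\eta)$-small") without saying in which norm or from which estimate of \cite{GLPR} they come.

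Some details in step 3 and the fallback need fixing.
\begin{itemize}
\item \emph{Size of $\mathcal R_1$.} Since $\kappa_1\asymp\eta$, not unit size, you get $\norm{\mathcal R_1}_{\dot H^s}\asymp\lambda_1^{-1/2}\kappa_1^{-a}\asymp\eta^{-a}$, which is unbounded rather than $O(1)$. It is still $o(\eta^{-3a})$, so the triangle inequality alone suffices.
\item \emph{Cross term.} Because of the previous point, the Riemann--Lebesgue cross-term argument is unnecessary. It would not apply as stated anyway: Lemma~\ref{lem:orthog} concerns fixed compact sets, not profiles whose scales vary with $\eta$ and $t$.
\item \emph{Remainder at $s=1$.} Interpolating the $L^2$ smallness of $\varepsilon$ against a mere $H^1$ bound gives no gain at $s=1$. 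That endpoint must be read off directly from \eqref{eq:glpr-saturation}, as the paper remarks.
\item \emph{Fallback.} It is not parameter-free: the bound $\norm{U_\eta}_{\dot H^2}\lesssim\kappa_2^{-3/2}$ uses the same scaling $\kappa_2\asymp\eta^3$. It also requires an $\dot H^2$ bound on the exact remainder $U_\eta-\Phi^{(N_0)}$, which none of the estimates quoted from \cite{GLPR} provides.
\end{itemize}
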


\begin{proof}
Since $\norm{U_\eta(T_{\rm in})}_2\asymp\eta^{1/2}$,
the interpolation inequality between $L^2$ and $H^1$, together with
\eqref{eq:glpr-initial-h1}, gives
\[
 \norm{U_\eta(T_{\rm in})}_{H^s}
 \lesssim
 \eta^{(1-s)/2}
 \eta^{-s(1/2+2\delta)}
 =\eta^{-a-2s\delta}.
\]

For the lower bound, write the approximate solution in
\cite[Proposition~4.6]{GLPR} as
\[
 \Phi^{(N_0)}(t)=\mathcal R_1(t)+\mathcal R_2(t),\qquad
 \kappa_j(t):=\lambda_j(t)(1-\beta_j(t)),
\]
where $\mathcal R_j$ is the $j$th rescaled correction profile.  In the
saturated regime,
\[
 \kappa_2(t)\asymp_\delta\eta^3,\qquad
 \kappa_1(t)\asymp\eta,\qquad R(t)\asymp_\delta t,
 \qquad t\geq T^-.
\]
These parameter estimates are recorded in
\cite[Theorem~1.2 and Lemma~4.15]{GLPR}.
The profile convergence in \cite[Proposition~2.3]{GLPR} gives an annulus
$I\Subset(0,\infty)$ and a cutoff $\chi\in C_c^\infty(I)$ for which
$\chi(D)Q_{\beta_2}$ has uniformly nonzero $L^2$ norm.  Define the rescaled
projection $P_I$ and its physical-scale versions by
\[
 \widehat{P_IF}(\xi)=\chi(\xi)\widehat F(\xi),\qquad
 \widehat{P_{j,t}F}(\xi)=\chi(\kappa_j(t)\xi)\widehat F(\xi),
 \quad j=1,2.
\]
The finite expansion in \cite[Proposition~4.6]{GLPR} expresses
$V_j^{(N_0)}-Q_{\beta_j}$ as correction profiles carrying at least one factor
$R^{-1}$.  More precisely, the weighted estimate in the proof of
\cite[Corollary~4.7]{GLPR} gives
\[
 |V_j^{(N_0)}(y)-Q_{\beta_j}(y)|
 \lesssim_\delta
 \frac{R^{-1}}{\langle y\rangle(1+(1-\beta_j)|y|)}.
\]
Integrating this bound gives the concrete consequence
\begin{equation}\label{eq:profile-correction-l2}
 \norm{V_j^{(N_0)}-Q_{\beta_j}}_{L_y^2}
 \lesssim_\delta R^{-1},\qquad j=1,2.
\end{equation}
The annulus used below is fixed in the rescaled $y$--frequency.  Hence
\eqref{eq:profile-correction-l2}, followed by the physical rescaling, says
that the correction has annular $H^s$ contribution $O_\delta(R^{-1})$
relative to the corresponding leading bubble.  In particular, $P_{2,t}$
projects to $|\xi|\asymp\kappa_2(t)^{-1}$.  Since
$\lambda_j(t)\asymp1$, the rescaling used here may be written explicitly as
\begin{equation}\label{eq:annular-rescaling}
 \mathcal B_{j,F}(x)
 =\lambda_j^{-1/2}F\left(\frac{x-x_j}{\kappa_j}\right),
 \qquad
 \norm{P_{j,t}\mathcal B_{j,F}}_{\dot H^s}
 =\lambda_j^{-1/2}\kappa_j^{1/2-s}
 \norm{P_I F}_{\dot H^s},
\end{equation}
In particular, a rescaled correction $F$ contributes at most
$C_I\lambda_j^{-1/2}\kappa_j^{1/2-s}\norm F_2$ on that annulus.  Since
$R(t)^{-1}=O_\delta(\eta)$ on this time range, the second profile satisfies
\begin{equation}\label{eq:second-annulus}
 \norm{P_{2,t}\mathcal R_2(t)}_{\dot H^s}
 \geq c_{s,\delta}\kappa_2(t)^{1/2-s}
 \gtrsim_{s,\delta}\eta^{-3a}.
\end{equation}
On the same annulus, split the first profile into the physical rescaling of
the bare bubble and its correction,
$\mathcal R_1=\mathcal R_1^Q+\mathcal R_1^{\rm corr}$.  The first-profile
coefficient in \cite[Proposition~4.6]{GLPR} yields the stronger estimate
\[
 \norm{V_1^{(N_0)}-Q_{\beta_1}}_{L_y^2}
 \lesssim_\delta
 \frac{b}{R(1+(1-\beta_1)R)}
 \lesssim_\delta\eta^3,
 \qquad t\geq T^-,
\]
because $b\asymp_\delta\eta^2$, $1-\beta_1\asymp\eta$, and
$R\gtrsim_\delta\eta^{-1}$ in the saturated regime.  The uniform rescaled
$H^1$ bound for $Q_{\beta_1}$ follows from
\cite[Proposition~2.3]{GLPR}.  Since $\kappa_1\asymp\eta$, physical rescaling
and Bernstein's inequality therefore give
\begin{align}
 \norm{P_{2,t}\mathcal R_1^Q(t)}_{\dot H^s}
 &\lesssim \kappa_2(t)^{1-s}
 \norm{\mathcal R_1^Q(t)}_{\dot H^1}
 \lesssim_\delta\eta^{3(1-s)}\eta^{-1/2}
 =O_\delta(\eta)\eta^{-3a},\label{eq:first-annulus-bare}\\
 \norm{P_{2,t}\mathcal R_1^{\rm corr}(t)}_{\dot H^s}
 &\lesssim \kappa_2(t)^{-s}\kappa_1(t)^{1/2}\eta^3
 \lesssim_\delta\eta^2\eta^{-3a}.
 \label{eq:first-annulus-correction}
\end{align}
Finally, estimate \cite[(5.82)]{GLPR}, followed by the compactness passage in
the proof of \cite[Theorem~1.2]{GLPR}, gives the quantitative exact-solution
remainder
\begin{equation}\label{eq:error-annulus}
 \norm{P_{2,t}(U_\eta(t)-\Phi^{(N_0)}(t))}_{\dot H^s}
 \lesssim \kappa_2(t)^{1-s}t^{-N_0/10}
 =o_{s,\delta}(\eta^{-3a}),
\end{equation}
after taking the fixed approximation order $N_0$ large.  Equations
\eqref{eq:second-annulus},
\eqref{eq:first-annulus-bare}--\eqref{eq:error-annulus} exclude cancellation and
prove \eqref{eq:saturated-hs} uniformly for $t\geq T^-$.  At $s=1$ one may
alternatively use the full $H^1$ saturation directly.
\end{proof}

Recall that $a=s-\half$, and set
\begin{equation}\label{eq:abc}
 e_{s,\delta}:=a+2s\delta,
 \qquad
 b_{s,\delta}:=2a-2s\delta,
 \qquad
 \gamma_{s,\delta}:=1+\frac as+2\delta.
\end{equation}
Choose $\delta<a/s$, so that $b_{s,\delta}>0$, and put
\begin{equation}\label{eq:rho-delta}
 \rho_{s,\delta}:=\frac{b_{s,\delta}}{\gamma_{s,\delta}}.
\end{equation}
Then
\begin{equation}\label{eq:rho-limit}
 \rho_{s,\delta}\longrightarrow
 \frac{2s(2s-1)}{4s-1}=\rho_s^*
 \qquad (\delta\downarrow0).
\end{equation}

\begin{proposition}[Persistent quantitative packets]\label{prop:persistent}
Fix $\half<s\leq1$ and $\delta>0$ as above.  There exist constants
$c,C>0$ such that the following holds.  Given $0<d\leq1$ and every
sufficiently large target time $B$ (with threshold allowed to depend on
$d$), there are a focusing datum $g\in H^1$
and an activation time $A\in[B,CB]$ satisfying
\begin{align}
 \norm g_{H^s}&<d,\label{eq:persist-small}\\
 \inf_{t\geq A}\norm{\flow-_tg}_{H^s}
 &\geq c\,d^{1+\rho_{s,\delta}/s}B^{\rho_{s,\delta}}.
 \label{eq:persist-plateau}
\end{align}
The $L^2$ norm of $g$ tends to zero as $B\to\infty$.
\end{proposition}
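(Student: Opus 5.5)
We will build $g$ from the GLPR solution $U_\eta$ using the mass-critical scaling \eqref{eq:scale}. The natural candidate is
\[
 g=S_\lambda U_\eta(T_{\rm in}),
\]
shifted in time so that the GLPR clock starts at $0$. Then \eqref{eq:scale} gives
\[
 \flow-_t g=S_\lambda U_\eta(T_{\rm in}+\lambda t),
\]
so the saturated regime $t'\geq T^-$ corresponds to physical times $t\geq A:=(T^--T_{\rm in})/\lambda$. Masses are scale invariant, so $\norm g_2\asymp\eta^{1/2}$, which tends to zero once $\eta\to0$ as $B\to\infty$. The data lie in $H^1$ and have mass below $\norm Q_2$, so the flow is global.

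The size estimates follow from Lemma~\ref{lem:glpr-hs} together with $\norm{S_\lambda f}_{\dot H^s}=\lambda^s\norm f_{\dot H^s}$. For the initial datum, with $\lambda\geq1$,
\[
 \norm g_{H^s}\lesssim \norm g_2+\lambda^s\eta^{-e_{s,\delta}}\lesssim \lambda^s\eta^{-e_{s,\delta}}+\eta^{1/2}.
\]
For $t\geq A$, using \eqref{eq:saturated-hs},
\[
 \norm{\flow-_tg}_{H^s}\geq\lambda^s\eta^{-3a}.
\]
To get \eqref{eq:persist-small} we choose
\[
 \lambda^s=c_0\,d\,\eta^{e_{s,\delta}}.
\]
The plateau value is then
\[
 \lambda^s\eta^{-3a}\asymp d\,\eta^{e-3a}=d\,\eta^{-b_{s,\delta}},
\]
using $3a-e=2a-2s\delta=b_{s,\delta}$. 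Since $T^-=\delta\eta^{-1}\gg T_{\rm in}$, the activation time is
\[
 A\asymp \eta^{-1}\lambda^{-1}\asymp \eta^{-1-e/s}d^{-1/s}=\eta^{-\gamma_{s,\delta}}d^{-1/s},
\]
because $1+e/s=1+a/s+2\delta=\gamma_{s,\delta}$.

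Next we pick $\eta$ from $B$. We set $\eta$ so that $\eta^{-\gamma}d^{-1/s}\asymp B$, i.e.
\[
 \eta\asymp (Bd^{1/s})^{-1/\gamma}.
\]
As $B\to\infty$ with $d$ fixed, this forces $\eta\to0$, which is the smallness GLPR requires; this is where the threshold on $B$ depends on $d$. The plateau becomes
\[
 d\,\eta^{-b}\asymp d\,(Bd^{1/s})^{b/\gamma}=d^{1+\rho/s}B^{\rho},
\]
which is \eqref{eq:persist-plateau}. We need $\lambda\geq1$ for the inhomogeneous $H^s$ bound; we can drop this requirement by bounding $\norm g_{H^s}\leq\norm g_2+\norm g_{\dot H^s}$, since the $L^2$ part $\eta^{1/2}$ is $<d/2$ for $B$ large.

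To land exactly in $[B,CB]$, we note that $\eta\mapsto A(\eta)$ is continuous and monotone up to constants. So we choose $\eta$ with $A\geq B$ at the smallest admissible comparable scale, and the ratio bounds give $A\leq CB$. Alternatively we fix $\eta$ by the formula and enlarge $C$ to absorb the implied constants. Here $\eta$ need not vary continuously in GLPR; the family exists for all sufficiently small $\eta$, and only comparability is needed.

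The main obstacle is bookkeeping the dependence of implied constants on $\delta$ and $s$ but not on $d$ or $B$. The substantive input, the uniform plateau lower bound \eqref{eq:saturated-hs}, is already supplied by Lemma~\ref{lem:glpr-hs}. The one further check is that the lower bound on $\dot H^s$ transfers to $H^s$ for all $t\geq A$, including the transfer through time shift and scaling; this is immediate since $\norm\cdot_{H^s}\geq\norm\cdot_{\dot H^s}$.
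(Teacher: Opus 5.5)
Your argument is the paper's proof. It uses the same scaling $\lambda^s=c_0d\eta^{e_{s,\delta}}$, the same activation time $A=(T^--T_{\rm in})/\lambda\asymp d^{-1/s}\eta^{-\gamma_{s,\delta}}$, the same plateau $\lambda^s\eta^{-3a}\asymp d\eta^{-b_{s,\delta}}$, and the same elimination of $\eta$; your self-corrected ``$\lambda\geq1$'' remark is harmless, since $\norm{S_\lambda f}_{H^s}^2\leq\norm f_2^2+\lambda^{2s}\norm f_{\dot H^s}^2$ holds for $s\leq1$ and every $\lambda>0$. One correction: GLPR's $U_\eta$ solves $iU_t+|D|U=|U|^2U$, so under \eqref{eq:hw} you get $\flow-_t\bigl(S_\lambda U_\eta(T_{\rm in})\bigr)=S_\lambda U_\eta(T_{\rm in}-\lambda t)$, which runs the two-soliton backward. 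You must take $g=S_\lambda\overline{U_\eta(T_{\rm in})}$, as the paper does, so that $\flow-_tg=S_\lambda\overline{U_\eta(T_{\rm in}+\lambda t)}$; since conjugation preserves every Sobolev norm and the mass, all your estimates then go through unchanged.
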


\begin{proof}
Let $V_\eta(t)=\overline{U_\eta(T_{\rm in}+t)}$ and choose
$\lambda>0$ by
\begin{equation}\label{eq:persistent-scale}
 \lambda^s=c_0d\eta^{e_{s,\delta}},
\end{equation}
where $c_0$ is a sufficiently small fixed constant.  Put
$g=S_\lambda V_\eta(0)$.  For the relevant parameters $0<\lambda\leq1$, and
\begin{equation}\label{eq:inhomogeneous-small-scale}
 \norm{S_\lambda f}_{H^s}^2
 \leq \norm f_2^2+\lambda^{2s}\norm f_{\dot H^s}^2.
\end{equation}
Lemma~\ref{lem:glpr-hs}, \eqref{eq:glpr-mass}, and
\eqref{eq:persistent-scale} therefore give \eqref{eq:persist-small}, after
requiring $\eta^{1/2}\leq c_0d$ and decreasing $c_0$ once.  This requirement
is met by taking the target time $B$ sufficiently large, with a threshold
depending on $d$.
The activation time and saturated height obey
\begin{align}
 A&=\frac{T^--T_{\rm in}}\lambda
 \asymp_{s,\delta}d^{-1/s}\eta^{-\gamma_{s,\delta}},
 \label{eq:persist-time}\\
 \inf_{t\geq A}\norm{\flow-_tg}_{H^s}
 &\gtrsim_{s,\delta}
 \lambda^s\eta^{-3a}
 \asymp d\eta^{-b_{s,\delta}}.
 \label{eq:persist-height}
\end{align}
Scaling preserves the $L^2$ norm.  Since the right-hand side of
\eqref{eq:persist-time} varies continuously and tends to infinity as
$\eta\downarrow0$, choose $\eta$ so that $A\in[B,CB]$.  Eliminating $\eta$
between \eqref{eq:persist-time} and \eqref{eq:persist-height} gives
\eqref{eq:persist-plateau}.
\end{proof}

\subsection{Gluing persistent packets and proof of Theorem~\ref{thm:limit}}

Fix $\half<s\leq1$ and $\kappa<\rho_s^*$, and choose $\delta$ so small that
$\kappa<\rho:=\rho_{s,\delta}$.  If $\kappa>0$, choose
\begin{equation}\label{eq:theta}
 1<\theta<\frac\rho\kappa;
\end{equation}
for $\kappa=0$, take any $\theta>1$.  Let $f_*\in\X^-_{s,q}$ and let
$\sigma>0$.  Choose $d_*>0$ sufficiently small, and set $d_j=d_*2^{-j}$, so
that
\begin{equation}\label{eq:budgets}
 \sum_jd_j<\min\{\sigma,q-\norm{f_*}_2\}.
\end{equation}
Choose $B_0>1$ later, and let
\begin{equation}\label{eq:targets}
 B_j=B_0^{\theta^{j-1}}.
\end{equation}
By increasing $B_0$, Proposition~\ref{prop:persistent} supplies packets
$g_j$ and activations $A_j\in[B_j,CB_j]$ such that
\begin{equation}\label{eq:scheduled}
 \norm{g_j}_{H^s}<d_j,
 \qquad
 \inf_{t\geq A_j}\norm{\flow-_tg_j}_{H^s}
 \geq(2j+2)(1+A_{j+1})^\kappa.
\end{equation}
Indeed, the quotient of the lower bound in
\eqref{eq:persist-plateau} by $(1+CB_{j+1})^\kappa$ contains
$B_j^{\rho-\theta\kappa}$, whose positive power beats the geometric loss in
$d_j$ and the factor $j$.  More explicitly, solving
\eqref{eq:persist-time} for $\eta$ shows that the required smallness
condition on $\eta$, together with $\eta^{1/2}<d_j$, imposes at most a
fixed-power threshold in $d_j^{-1}$.  The tower
$B_j=B_0^{\theta^{j-1}}$ dominates these thresholds uniformly after increasing
$B_0$.  We may also ensure
$CB_j<B_{j+1}$ and hence $A_{j+1}>A_j+1$.

Preselect all packets and activation times before choosing any translations.
Set $F_0=f_*$.  Once $F_{j-1}$ has been defined, choose $y_j$ so large that
Lemmas~\ref{lem:decouple}--\ref{lem:orthog} give, uniformly for
$0\leq t\leq A_{j+1}$,
\begin{equation}\label{eq:stage-lower}
 \norm{\flow-_t(F_{j-1}+\trans{y_j}g_j)}_{H^s}
 \geq
 \max\{\norm{\flow-_tF_{j-1}}_{H^s},
 \norm{\flow-_tg_j}_{H^s}\}-\eps_j,
\end{equation}
where $\eps_j>0$ and $\sum_j\eps_j<1$.  Define
$F_j=F_{j-1}+\trans{y_j}g_j$.  All partial sums are focusing subthreshold by
\eqref{eq:budgets}, so every application of finite-time decoupling is valid.

The series
\begin{equation}\label{eq:F-limit}
 F=f_*+\sum_{j=1}^\infty\trans{y_j}g_j
\end{equation}
converges absolutely in $H^s$, belongs to $\X^-_{s,q}$, and satisfies
$\norm{F-f_*}_{H^s}<\sigma$.  If $t\in[A_j,A_{j+1}]$, first apply
\eqref{eq:stage-lower} at stage $j$, then at every later stage.  Since the
horizon $A_{k+1}$ at stage $k>j$ contains this interval,
\[
 \norm{\flow-_tF_n}_{H^s}
 \geq(2j+2)(1+A_{j+1})^\kappa-
 \sum_{k=j}^n\eps_k.
\]
For fixed $j$, continuous dependence and $F_n\to F$ give uniform convergence
of the flows on $[0,A_{j+1}]$.  Letting $n\to\infty$ yields
\begin{equation}\label{eq:window-final}
 \inf_{A_j\leq t\leq A_{j+1}}
 \frac{\norm{\flow-_tF}_{H^s}}{(1+t)^\kappa}
 \geq j
\end{equation}
after increasing $B_0$ once more.  The windows cover $[A_1,\infty)$, so
\eqref{eq:window-final} proves Theorem~\ref{thm:limit}.

The order of choices matters here: $A_{j+1}$ is known before $y_j$ is chosen.
Thus the decoupling horizon at stage $j$ covers the entire interval assigned
to packet $j$; no separation distance depends on an event time which is still
to be chosen.

\section{Smooth-data cascades}\label{sec:smooth-data}

\subsection{Replicated smooth packets}

We now turn to the smooth-data construction, which applies to both signs.  Fix
$s>\half$ and choose a sufficiently small parameter
$\delta_0=\delta_0(s)>0$ in Pocovnicu's approximation theorem
\cite{PocovnicuApprox}.  With this convention, all constants below may depend
on $s$ and $\delta_0$.  Pocovnicu's construction applies to the defocusing
sign, and the observation in \cite[footnote~1]{GLPR} gives the corresponding
focusing family.  For the positive sign one may take
\[
 W_0^+(x)=\frac1{x+i}-\frac2{x+2i},
\]
the datum in \cite[Proposition~1.3]{PocovnicuApprox}; for the negative sign we
take $W_0^-(x)=\overline{W_0^+(-x)}$.  Thus $W_0^\mu\in H^\infty(\R)$, and the
corresponding solutions satisfy
\begin{equation}\label{eq:base-family}
 p_\eps^\mu=\eps W_0^\mu,
 \qquad
 t_\eps\asymp_s\eps^{-2}(\log(1/\eps))^{1/(4s-1)},
\end{equation}
such that, with $L_\eps=\log(1/\eps)$,
\begin{equation}\label{eq:base-signal}
 \norm{\flow\mu_{t_\eps}p_\eps^\mu}_{H^s}
 \geq c_s\eps L_\eps^{\alpha_s}.
\end{equation}
Since $\delta_0$ is fixed throughout, its contribution inside the logarithms
is absorbed into the constants.  The exponent $\alpha_s$ in
\eqref{eq:base-signal} is the one proved in
\cite[proof of Corollary~1.4, especially (2.20)--(2.21)]{PocovnicuApprox}.
The printed statement of Corollary~1.4 has exponent
$(4s-2)/(4s-1)$, whereas its proof gives $(2s-1)/(4s-1)$; we use the proved
exponent.  In
particular,
\begin{equation}\label{eq:base-time-log}
 \log t_\eps=2L_\eps+O_s(\log L_\eps).
\end{equation}

\cite[Lemma~2.2]{PocovnicuApprox} proves the oscillatory estimates required
above for every $s>\half$.  In particular, no separate low-frequency
argument is needed in the range $\half<s<1$.

For completeness, we explain the change of sign.  Define the antilinear
isometry
\[
 (Jf)(x):=\overline{f(-x)}.
\]
Then
\[
 \widehat{Jf}(\xi)=\overline{\widehat f(\xi)},
\]
so $J$ preserves positive Fourier support and every Sobolev norm.  If
\[
 i\partial_tW=\Pi_+(|W|^2W),
\]
then
\[
 i\partial_t(JW)=-\Pi_+(|JW|^2JW).
\]
Thus the growing positive-sign Szeg\H{o} orbit gives the corresponding
negative-sign orbit.  The normal-form proof of
\cite[Theorem~1.2]{PocovnicuApprox} is unchanged after reversing the cubic
sign: the resonant and oscillatory cubic terms change sign, while all the
estimates used in the argument remain the same.  Consequently,
\eqref{eq:base-family}--\eqref{eq:base-signal} hold for both signs.  This
focusing modification is also noted in \cite[footnote~1]{GLPR}.
\begin{proposition}[Smooth cluster packet]\label{prop:cluster}
Fix $s>\half$, an integer $M\geq s$, a sign $\mu$, and a gauge $h$ satisfying
\eqref{eq:gauge}.  Given $d,\zeta,A,L_0>0$, there exist
$G\in C_c^\infty(\R)$ and $T>L_0$ such that
\begin{equation}\label{eq:cluster-properties}
 \norm G_{H^M}<d,
 \qquad
 \norm G_2<\zeta,
 \qquad
 \norm{\flow\mu_TG}_{H^s}>A(h(T)+1).
\end{equation}
For the focusing sign, $G$ and all partial sums used in its construction can
be chosen strictly subthreshold.
\end{proposition}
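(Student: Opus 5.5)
The plan is to build $G$ as a sum of $N$ widely translated copies of a single Pocovnicu packet $p_\eps^\mu=\eps W_0^\mu$, after a compactly supported smooth truncation, and to let the decoupling lemmas of Section~\ref{sec:decoupling} do the rest. Fix $\eps$ small (to be chosen) and put $T=t_\eps$ from \eqref{eq:base-family}. Since $W_0^\mu\in H^\infty$, choose $\phi_\eps\in C_c^\infty$ with $\norm{\phi_\eps-p_\eps^\mu}_{H^{M}}$ tiny. By continuous dependence on the finite interval $[0,t_\eps]$ (local Lipschitz dependence in $H^s$; in the focusing case the mass $\eps\norm{W_0}_2$ is far below threshold), \eqref{eq:base-signal} then gives $\norm{\flow\mu_T\phi_\eps}_{H^s}\geq \tfrac12c_s\eps L_\eps^{\alpha_s}$. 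Set
\[
 G=\sum_{k=1}^N\trans{y_k}\phi_\eps ,
\]
with the $y_k$ chosen inductively. At stage $k$, Lemma~\ref{lem:decouple} on $[0,T]$ and Lemma~\ref{lem:orthog} (applied to the compact orbit segments) ensure that the squared $H^s$ norm of $\flow\mu_T$ of the partial sum equals the sum of the individual squared norms up to a small error. The same lemma with fixed functions gives $\norm{G}_{H^M}^2\approx N\norm{\phi_\eps}_{H^M}^2$ and $\norm G_2^2\approx N\norm{\phi_\eps}_2^2$. Consequently
\[
 \norm G_{H^M}\lesssim \sqrt N\,\eps,\qquad \norm G_2\lesssim\sqrt N\,\eps,\qquad \norm{\flow\mu_TG}_{H^s}\gtrsim \sqrt N\,\eps L_\eps^{\alpha_s}.
\]
Here the implicit constants depend only on $W_0$ and $M$, since the packet is unscaled.

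Next comes the choice of parameters. Put $\sqrt N\eps\asymp \beta$ with $\beta=c\min\{d,\zeta,\tfrac12(q-\ldots)\}$ fixed. In the focusing case, take $\beta$ below $\norm Q_2$ times a factor, so that $G$ and every partial sum have mass $<q$ and the existence hypotheses of Lemma~\ref{lem:decouple} hold. Then the first two requirements in \eqref{eq:cluster-properties} hold, and the output is $\gtrsim\beta L_\eps^{\alpha_s}$. By \eqref{eq:base-time-log}, $\log(2+T)\asymp L_\eps$, so the gauge hypothesis \eqref{eq:gauge} gives $h(T)=o(L_\eps^{\alpha_s})$ as $\eps\to0$. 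Hence, for $\eps$ small enough (depending on $\beta,A$), we get $\beta L_\eps^{\alpha_s}>C A(h(T)+1)$ and $T>L_0$. Finally, pick $N=\lceil(\beta/\eps)^2\rceil$, adjusting $\eps$ slightly so that the mass bound is strict.

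The main obstacle is rounding: replacing $\sqrt N\eps$ by $\beta$ up to a constant factor needs a mass margin. I would absorb this by taking $\beta$ a fixed fraction of the relevant thresholds. A second concern is making the decoupling errors summable, and uniform in $N$ even though $N$ is large. The remedy is that $N$ is fixed before the translations are chosen, so the errors at each of the finitely many stages can be taken $\le \beta/(10N)$ in norm. The total error is then $\le\beta/10$, negligible against $\sqrt N\eps L_\eps^{\alpha_s}$. Orthogonality in squared norm, rather than the max bound \eqref{eq:max-lower}, is what yields the $\sqrt N$ gain. For this I would apply Lemma~\ref{lem:orthog} to $\flow\mu_T$ of the partial sum, which is close to a finite sum of translates, each paired against the new translate.
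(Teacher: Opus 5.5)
Your proposal is correct and follows essentially the paper's argument. Both replicate $N$ far-translated copies of the unscaled packet $\eps W_0^\mu$. Both fix $N$ before choosing the translations via Lemmas~\ref{lem:decouple}--\ref{lem:orthog}, so that the $H^M$ and $L^2$ sizes and the $H^s$ output at $t_\eps$ all scale like $\sqrt N$. Both then let the $L_\eps^{\alpha_s}$ gain beat the gauge $h$.

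The differences are cosmetic. You fix $\sqrt N\eps\asymp\beta$ and smooth each copy to $C_c^\infty$ first. The paper instead takes $N_\eps=\lceil(8R_\eps/P_\eps)^2\rceil$, shows $\sqrt{N_\eps}\,\eps\to0$, and approximates the finished sum by a $C_c^\infty$ function at the end.
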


\begin{proof}
Let
\[
 R_\eps=A(h(t_\eps)+1),
 \qquad
 P_\eps=\norm{\flow\mu_{t_\eps}p_\eps^\mu}_{H^s},
\]
and choose
\begin{equation}\label{eq:N-cluster}
 N_\eps=\left\lceil
 \left(\frac{8R_\eps}{P_\eps}\right)^2
 \right\rceil.
\end{equation}
Set
\[
 C_M:=\|W_0^\mu\|_{H^M},\qquad C_{L^2}:=\|W_0^\mu\|_2.
\]
Since $\norm{p_\eps^\mu}_{H^M}=C_M\eps$, equations
\eqref{eq:base-signal}--\eqref{eq:base-time-log} and the gauge assumption give
\begin{equation}\label{eq:cluster-cost}
 \sqrt{N_\eps}\,\eps
 \lesssim_{s,M,A}
 \frac{h(t_\eps)+1}{L_\eps^{\alpha_s}}+\eps
 \longrightarrow0.
\end{equation}

For fixed $\eps$, abbreviate $N=N_\eps$, $p=p_\eps^\mu$, and
$w=\flow\mu_{t_\eps}p$.  Choose translations recursively and set
\[
 G_k=\sum_{\ell=1}^k\trans{y_\ell}p,\qquad
 Z_k=\sum_{\ell=1}^k\trans{y_\ell}w,\qquad G_0=Z_0=0.
\]
At step $k$, Lemmas~\ref{lem:decouple}--\ref{lem:orthog} allow $y_k$ to be
chosen so that
\begin{align}
 \norm{\flow\mu_{t_\eps}G_k-\flow\mu_{t_\eps}G_{k-1}
 -\trans{y_k}w}_{H^s}
 &<\frac{R_\eps}{2N},\label{eq:cluster-step-flow}\\
 \abs{\ip{G_{k-1}}{\trans{y_k}p}_{H^M}}
 &<\frac12 C_M^2\eps^2,\label{eq:cluster-step-initial}\\
 \abs{\ip{Z_{k-1}}{\trans{y_k}w}_{H^s}}
 &<\frac18P_\eps^2.\label{eq:cluster-step-terminal}
\end{align}
For the focusing sign we simultaneously impose
$|\ip{G_{k-1}}{\trans{y_k}p}_{L^2}|<C_{L^2}^2\eps^2/2$.  The resulting estimate
\[
 \norm{G_k}_2\leq\sqrt{2k}\,C_{L^2}\eps
 \leq2C_{L^2}\sqrt N\,\eps=o(1)
\]
keeps every partial aggregate strictly subthreshold, so that the next use of
Lemma~\ref{lem:decouple} is legitimate.

Summing \eqref{eq:cluster-step-flow}, and expanding the squares in
\eqref{eq:cluster-step-initial}--\eqref{eq:cluster-step-terminal}, shows that,
for
\begin{equation}\label{eq:G-cluster}
 G_\eps=G_N,
\end{equation}
one has
\begin{align}
 \norm{G_\eps}_{H^M}
 &\leq2C_M\sqrt{N_\eps}\eps,
 \label{eq:G-small}\\
 \norm{\flow\mu_{t_\eps}G_\eps-
 \sum_{k=1}^{N_\eps}\trans{y_k}
 \flow\mu_{t_\eps}p_\eps^\mu}_{H^s}
 &<R_\eps,\label{eq:G-flow-error}\\
 \norm{\sum_{k=1}^{N_\eps}\trans{y_k}
 \flow\mu_{t_\eps}p_\eps^\mu}_{H^s}
 &\geq\frac12\sqrt{N_\eps}P_\eps.
 \label{eq:G-output-orthog}
\end{align}
The last two inequalities and \eqref{eq:N-cluster} yield a terminal norm
larger than $3R_\eps$.  Equation \eqref{eq:cluster-cost} makes both the
$H^M$ and $L^2$ norms arbitrarily small, and also allows $t_\eps>L_0$.

Thus $G_\eps\in H^\infty$ has all the desired strict inequalities.  We may
now approximate this finite sum in $H^M$ by a function in $C_c^\infty$.
Fixed-time continuous dependence in $H^M$, with $M\geq s$, preserves the
terminal lower bound, the initial estimate, and the focusing mass inequality.
\end{proof}

\subsection{The Fr\'echet argument and proof of Theorem~\ref{thm:hinfty}}

The spaces in \eqref{eq:hinfty-spaces} are Baire spaces; in particular,
$\X^-_{\infty,q}$ is an open subset of the Fr\'echet space $H^\infty$.
Persistence of regularity and continuous dependence in each integer $H^M$,
$M\geq1$, show that, on every fixed finite time interval, the flow is
continuous in the Fr\'echet topology (the $L^2$ seminorm is controlled by
$H^1$).  For $m,N\in\N$, set
\begin{equation}\label{eq:open-events}
 \mathcal O_{m,N}=
 \bigcup_{t>N}\left\{f:
 \norm{\flow\mu_tf}_{H^s}>m h(t)\right\}.
\end{equation}
Here $f$ ranges over $\X^+_\infty$ for the defocusing sign and over
$\X^-_{\infty,q}$ for the focusing sign.
Each $\mathcal O_{m,N}$ is relatively open.  It is dense as well.  Indeed, a
basic Fr\'echet neighborhood of a datum $f$ constrains only finitely many
Sobolev seminorms, hence contains an $H^M$ ball around $f$ for some sufficiently
large integer $M\geq s$.  Invoke Proposition~\ref{prop:cluster} with
$A=4m$ to obtain a cluster which is small in that $H^M$ norm and whose
future $H^s$ signal has a strict margin over $m h(T)$.  Translate the entire
cluster far from $f$ and apply
Lemmas~\ref{lem:decouple}--\ref{lem:orthog} at time $T$.  In the focusing
case choose its $L^2$ norm below the remaining mass margin.  This proves
density.

Consequently,
\begin{equation}\label{eq:residual-fixed}
 \bigcap_{m,N=1}^\infty\mathcal O_{m,N}
\end{equation}
is the dense $G_\delta$ set in the first part of
Theorem~\ref{thm:hinfty}.

For the simultaneous statement, intersect \eqref{eq:residual-fixed} over all
rational $\sigma>\half$ and all rational
$0\leq\gamma<\alpha_\sigma$, with
$h(t)=(\log(2+t))^\gamma$.  Let $r>\half$ and
$0\leq\gamma<\alpha_r$ be real.  The function $r\mapsto\alpha_r$ is
continuous and strictly increasing, so one may choose rational
$\sigma\in(\half,r)$ and rational $\gamma'$ such that
\[
 \gamma<\gamma'<\alpha_\sigma.
\]
The continuous embedding $H^r\hookrightarrow H^\sigma$ then transfers the
limsup conclusion for $(\sigma,\gamma')$ to the one for $(r,\gamma)$.
This proves \eqref{eq:simultaneous}.

The same density statement also has a direct diagonal construction.  Start from a
datum in a prescribed basic neighborhood of order $M_0$ and enumerate the
rational pairs $(\sigma_j,\gamma_j)$ so that every pair occurs infinitely
often.  Set $T_0=0$.  At stage $j$ choose
\[
 M_j\geq\max\{M_0,j,\lceil\sigma_1\rceil,\ldots,
 \lceil\sigma_j\rceil\}.
\]
Assume that the first $j-1$ event inequalities and corresponding common
continuity radii $r_1,\ldots,r_{j-1}$ have already been fixed.  Choose the
$j$th cluster so that
\[
 \norm{G_j}_{H^{M_j}}
 <2^{-j}\min\{1,r_1,\ldots,r_{j-1}\},
\]
as well as an overall summable budget for the original neighborhood and, in
the focusing case, for the remaining $L^2$ norm.  Apply
Proposition~\ref{prop:cluster} with target factor $A=8j$, target gauge
$(\log(2+t))^{\gamma_j}$, and $T_j>T_{j-1}+1$; then translate the whole
cluster far enough to create the new event by
Lemmas~\ref{lem:decouple}--\ref{lem:orthog}, with errors small enough that the
new normalized event is larger than $4j$.  Fixed-time continuity at the
finitely many times $T_1,\ldots,T_j$ now supplies $r_j>0$, chosen as a common
radius in the finite product of the norms
$H^{\sigma_1},\ldots,H^{\sigma_j}$, such that all established inequalities
survive and the newest normalized event remains larger than $2j$.

After decreasing the numerical budgets once, the displayed size condition
gives $\sum_{k>j}\norm{G_k}_{H^{\sigma_i}}<r_j$ for every $i\leq j$.
Consequently all earlier events survive in the limit.  For each fixed integer
$m$, the terms with $k\geq m$ are summable in
$H^m$ because $M_k\geq k$.  The series therefore converges in every $H^m$
and hence in $H^\infty$, while every rational pair has event factors tending
to infinity.  This gives a datum in every prescribed Fr\'echet neighborhood.

\end{document}